\newcommand*\bigcdot{\mathpalette\bigcdot@{.5}}
\newcommand*\bigcdot@[2]{\mathbin{\vcenter{\hbox{\scalebox{#2}{$\m@th#1\bullet$}}}}}
\newtheorem{theorem}{Theorem}
\newtheorem{lemma}[theorem]{Lemma}
\newtheorem{proposition}[theorem]{Proposition}
\theoremstyle{remark}
\def\XXint#1#2#3{{\setbox0=\hbox{$#1{#2#3}{\int}$ }
		\vcenter{\hbox{$#2#3$ }}\kern-.6\wd0}}
\newcommand{\R}{\mathbb{R}}
\newcommand{\B}{\mathscr{B}}
\newcommand{\sobo}{\operatorname{W}}
\newcommand{\lebe}{\operatorname{L}}
\renewcommand{\leq}{\leqslant}
\newcommand{\sym}{\operatorname{sym}}
\newcommand{\proj}{\mathrm{Proj}}
\newcommand{\rmim}{\mathrm{im\,}}
\newcommand{\A}{\mathscr{A}}
		\noindent\textsc{Centro di Ricerca Matematica Ennio de Giorgi, Scuola Normale Superiore,
			\\Piazza dei Cavalieri, 3, 56126 Pisa, Italy, }   \\
		\noindent\textit{E-mail address}, B.~Rai\cb{t}\u{a}: \texttt{bogdanraita@gmail.com}
\begin{document}
	\title[Potential operators for compensated compactness]{A simple construction of potential operators for compensated compactness}
	\author[B. Rai\cb{t}\u{a}]{Bogdan Rai\cb{t}\u{a}}


	\begin{abstract}
		We give a short proof of the fact that each homogeneous linear differential operator $\A$ of constant rank admits a homogeneous potential operator $\B$, meaning that $$\ker\A(\xi)=\mathrm{im\,}\B(\xi) \quad\text{for }\xi\in\R^n\setminus\{0\}.$$
		We make some refinements of the original result and some related remarks.
	\end{abstract}
	\maketitle
	The framework of compensated compactness was introduced in \cite{Tartar1,Tartar2,Murat1,Murat2}  as a unified framework for continuum mechanics. It was subsequently developed in many directions and has received numerous substantial contributions \cite{BCO,Dacorogna82,CLMS,FM99,Tartar_survey}. The theme of compensated compactness is the rigid interaction between nonlinear effects (constitutive relations) and weakly convergent sequences that satisfy an additional linear differential constraint (balance relations).
	Typically, the linear equations satisfy the so called \emph{constant rank} condition \cite{SW,Murat2} that we will recall bellow. Under this assumption, it was recently proved in \cite{R_pot} (cf. \cite{ARS}) that the system of balance relations can be solved efficiently under suitable boundary conditions. This implies, for instance, that the notion of $\A$-quasiconvexity of Fonseca--M\"uller \cite{FM99} coincides with the seemingly more restrictive notion of $\A$-$\B$-quasiconvexity of Dacorogna \cite{Dacorogna82}. Further advances were obtained in \cite{GR,GKR,GRS,AAR,KR}. 
	
	For the purposes of this note, we hardly need to refer to vectorial differential operators, but rather think of them as matrix valued polynomials
	$$
	\A(\xi)\coloneqq (\A_{ij}(\xi))_{1\leq i\leq m,\,1\leq j\leq N}
	\quad\text{for }\xi\in\R^n,
	$$
	that can be identified with a vectorial differential operator with constant coefficients. Let
	$$
	\mathcal R_\A \coloneqq \{\xi\in\R^n\colon \mathrm{rank\,}\A(\xi) \text{ is maximal}\},
	$$
	which is an open dense subset of $\R^n$ (its complement is a proper real variety). 
	
	The result of \cite[Thm.~1]{R_pot} is a corollary of  the following:
	\begin{theorem}\label{thm:main}
		Let $\A$ be \emph{any} $\R^{M\times N}$ valued polynomial. Then there exists a $\R^{N\times N}$ valued polynomial $\B$ such that
		$$
		\ker\A (\xi)=\rmim\B(\xi)\quad \text{for }\xi\in\mathcal R_\A.
		$$
		Moreover, if $\A$ has homogeneous rows, then $\B$ can be chosen to have the same homogeneity in every entry: if $\A_{ij}$ is  homogeneous of degree $h_i$ and $r$ is the generic rank of $\A$, we can choose $\B$ to be $(2r\max_i{h_i})$-homogeneous.
	\end{theorem}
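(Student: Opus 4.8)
My plan is to pass from $\A$ to a symmetric Gram-type matrix and to read off the orthogonal projection onto $\ker\A(\xi)$ directly from its characteristic polynomial. First I would set $S(\xi)\coloneqq\A(\xi)^{\mathsf T}\A(\xi)$, an $\R^{N\times N}$-valued polynomial that is symmetric and positive semidefinite. Over $\R$ one has $\ker(\A^{\mathsf T}\A)=\ker\A$ and $\rank(\A^{\mathsf T}\A)=\rank\A$ pointwise, so $S(\xi)$ shares the kernel and the generic rank $r$ of $\A(\xi)$, and its maximal-rank set coincides with $\mathcal R_\A$. The advantage is that $S(\xi)$ is symmetric, hence orthogonally diagonalisable with kernel equal to the orthogonal complement of its range; this is exactly the structure that makes the spectral projection a polynomial in $S$.

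Next I would extract that projection from the characteristic polynomial $p(t,\xi)=\det(t\,\id-S(\xi))=\sum_{k=0}^{N}c_k(\xi)t^k$, whose coefficients $c_k$ are polynomials in $\xi$ with $c_N\equiv1$. For $\xi\in\mathcal R_\A$ the eigenvalue $0$ of $S(\xi)$ has multiplicity exactly $N-r$, so $c_0(\xi)=\dots=c_{N-r-1}(\xi)=0$ while $c_{N-r}(\xi)\neq0$; since $\mathcal R_\A$ is dense and the $c_k$ are polynomials, the first $N-r$ coefficients vanish identically, yielding a factorisation $p(t,\xi)=t^{N-r}g(t,\xi)$ with $g(t,\xi)=\sum_{j=0}^{r}c_{N-r+j}(\xi)t^j$ and $g(0,\xi)=c_{N-r}(\xi)$. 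I would then define $\B(\xi)\coloneqq g(S(\xi),\xi)$. On $\ker S(\xi)$ the matrix $\B(\xi)$ acts as $c_{N-r}(\xi)\,\id$, while on a nonzero eigenvector for $\lambda$ it acts by $g(\lambda,\xi)=\lambda^{-(N-r)}p(\lambda,\xi)=0$; so $\B(\xi)=c_{N-r}(\xi)P(\xi)$, where $P(\xi)$ is the orthogonal projection onto $\ker\A(\xi)$. On $\mathcal R_\A$ we have $c_{N-r}(\xi)\neq0$, whence $\rmim\B(\xi)=\rmim P(\xi)=\ker\A(\xi)$. This last identification, relying on the orthogonal eigendecomposition of $S$, is the step I would write out most carefully.

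For the homogeneity refinement I would \emph{not} symmetrise naively, since $S_{jk}=\sum_i\A_{ij}\A_{ik}$ mixes the degrees $2h_i$. The genuinely delicate point is that one cannot simply rescale the rows of $\A$ to a common degree: a homogeneous form nonvanishing on $\R^n\setminus\{0\}$ must have even degree (odd-degree forms change sign), so the rescaling factor $|\xi|^{h-h_i}$ need not be a polynomial. I would circumvent this with the weighted Gram matrix
$$
\widehat S(\xi)_{jk}\coloneqq\sum_{i=1}^{M}|\xi|^{2(h-h_i)}\A_{ij}(\xi)\A_{ik}(\xi),\qquad h\coloneqq\max_i h_i,\quad |\xi|^2=\textstyle\sum_l\xi_l^2,
$$
in which only the even powers $|\xi|^{2(h-h_i)}$ appear, keeping every entry a polynomial. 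Each summand is then homogeneous of degree $2h$, and positivity of the weights for $\xi\neq0$ gives $\ker\widehat S(\xi)=\ker\A(\xi)$ with the same generic rank $r$. Running the previous construction with $\widehat S$ in place of $S$, the coefficient $\widehat c_{N-r+j}$ is homogeneous of degree $2h(r-j)$, so each term $\widehat c_{N-r+j}\,\widehat S^{\,j}$ of $\B=g(\widehat S,\,\cdot\,)$ is homogeneous of degree $2h(r-j)+2hj=2rh$. Hence $\B$ is $(2r\max_i h_i)$-homogeneous in every entry, which completes the refinement.
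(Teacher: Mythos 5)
Your proof is correct and takes essentially the same route as the paper: your $\B(\xi)=g(S(\xi),\xi)$ is exactly the paper's $Q(M)$ from Lemma~\ref{lem:proj} with $M=\A^*(\xi)\A(\xi)$ (your density argument merely makes explicit that its coefficients are polynomials in $\xi$, which the paper leaves implicit), and your weighted Gram matrix $\widehat S$ coincides entry by entry with the paper's $\tilde\A^*(\xi)\tilde\A(\xi)$, since tensoring the $i$th row with $\xi^{\otimes(h_m-h_i)}$ produces precisely the even polynomial weights $|\xi|^{2(h_m-h_i)}$. There are no gaps; the degree count $2h(r-j)+2hj=2rh$ matches the claimed $(2r\max_i h_i)$-homogeneity.
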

	Since $\mathcal R_\A$ is open, we can infer in addition that $\A\B\equiv 0$, so
	$$
	\ker\A (\xi)\supset\rmim\B(\xi)\quad \text{for }\xi\in\mathcal \R^n.
	$$
	The inclusion must be strict on the entire $\R^n\setminus\mathcal R_\A$; an instructive example is the wave operator $\A(\xi_1,\xi_2)=\xi_1^2-\xi_2^2$.  The proof of Theorem~\ref{thm:main} follows easily from the following:
	\begin{lemma}\label{lem:proj}
		Let $M\in\R^{N\times N}_{\sym}$ and $Q(t)\coloneqq(t-\lambda_1)\ldots(t-\lambda_r)=\sum_{i=0}^r a_{r-i}t^i$, where $\lambda_1,\ldots\lambda_r$ are the nonzero eigenvalues of $M$, counting multiplicities. Then
		$$
		\proj_{\ker M}=\frac{1}{a_r}\,Q(M).
		$$
	\end{lemma}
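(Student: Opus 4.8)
The plan is to diagonalise $M$ and then simply evaluate the polynomial $Q$ on each eigenvalue. Since $M$ is real symmetric, the spectral theorem furnishes an orthonormal basis $v_1,\dots,v_N$ of $\R^N$ consisting of eigenvectors, say $Mv_k=\mu_k v_k$. The orthogonal projection $\proj_{\ker M}$ is characterised by acting as the identity on those $v_k$ with $\mu_k=0$ and as $0$ on those with $\mu_k\neq 0$. Hence it suffices to check that $\tfrac1{a_r}Q(M)$ behaves in exactly this way on the basis $\{v_k\}$.

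First I would record the two elementary facts about $Q$ that make the identity work. Its constant term is
$$
a_r=Q(0)=(-\lambda_1)\cdots(-\lambda_r)=(-1)^r\lambda_1\cdots\lambda_r,
$$
which is nonzero precisely because $\lambda_1,\dots,\lambda_r$ are the \emph{nonzero} eigenvalues of $M$; this is what legitimises dividing by $a_r$. Moreover, by construction every nonzero eigenvalue of $M$ is a root of $Q$, so $Q(\mu)=0$ whenever $\mu\neq 0$ is an eigenvalue.

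Next I would apply the polynomial functional calculus eigenvector by eigenvector. Because $Mv_k=\mu_k v_k$, any polynomial $p$ satisfies $p(M)v_k=p(\mu_k)v_k$. Taking $p=Q$: if $\mu_k=0$ then $Q(M)v_k=Q(0)v_k=a_r v_k$, so $\tfrac1{a_r}Q(M)v_k=v_k$; if $\mu_k\neq 0$ then $\mu_k$ is a root of $Q$ and $Q(M)v_k=Q(\mu_k)v_k=0$. Thus $\tfrac1{a_r}Q(M)$ coincides with $\proj_{\ker M}$ on an orthonormal basis, hence everywhere, which is the assertion.

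I do not anticipate a genuine obstacle: the only points requiring a moment's care are that $a_r\neq 0$ (guaranteed exactly by omitting the zero eigenvalues from the factorisation of $Q$) and that the functional calculus respects the eigendecomposition, which is immediate since $M$ acts as the scalar $\mu_k$ on each $v_k$. Note that the symmetry of $M$ is used only to secure an \emph{orthonormal} eigenbasis, so that $\tfrac1{a_r}Q(M)$ is the orthogonal projection onto $\ker M$ rather than merely some projection with that kernel and range.
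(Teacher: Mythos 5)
Your proof is correct and follows essentially the same route as the paper: both rest on the spectral theorem, diagonalising $M$ and evaluating $Q$ on the spectrum so that $Q$ vanishes on the nonzero eigenvalues while contributing the constant term $a_r=(-1)^r\lambda_1\cdots\lambda_r\neq 0$ on $\ker M$. The paper phrases this in matrix form via $Q(M)=O^*Q(D)O$ and the identity $\proj_{\ker O^*DO}=O^*\proj_{\ker D}O$, whereas you argue eigenvector by eigenvector, but this is only a difference of presentation.
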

	\begin{proof}[Proof of Lemma~\ref{lem:proj}]
		Consider an orthogonal matrix $O\in\R^{N\times N}$ such that $M=O^*DO$ where $D=\mathrm{diag}(\lambda_1,\ldots,\lambda_r,0,\ldots 0)$. Then we can compute
		$$
		Q(M)=O^*Q(D)O=O^* 
		\left(\begin{matrix}
			\mathbf{0}_{r\times r}&\mathbf{0}_{r\times (N-r)}\\
			\mathbf{0}_{(N-r)\times r}&(-1)^r\lambda_1\ldots \lambda_r\mathbf{I}_{N-r}
		\end{matrix}\right)
		O=a_rO^*\proj_{\ker D}O.
		$$
		One concludes by checking the elementary fact that $\proj_{\ker O^*DO}=O^*\proj_{\ker D}O$.
	\end{proof}
	\begin{proof}[Proof of Theorem \ref{thm:main}]
		The proof of the exact relation follows at once from Lemma \ref{lem:proj} by setting $M=\A^*(\xi)\A(\xi)$ and $\B(\xi)=Q(M)$. In the case of homogeneous operators, we will make a slight modification. We can assume that $h_1\leq h_2\leq \ldots \leq h_m$. We define the  operator $\tilde \A$ row wise by $\tilde \A_{ij}(\xi)v\coloneqq [\A_{ij}(\xi)v]\otimes\xi^{\otimes(h_m-h_i)}$ for $v\in\R^N$, so basically $\tilde\A$ is defined as $\tilde\A_{ij}=D^{h_m-h_i}\A_{ij}$. We have that $\tilde \A$ is $h_m$-homogeneous in every entry and $\ker\A(\xi)=\ker\tilde\A(\xi)$ for all $\xi\in\R^n$. Setting $M=\tilde\A^*(\xi)\tilde\A(\xi)$ we remark that $M$ is an $\R^{N\times N}_{\sym}$-valued $2h_m$-homogeneous polynomial, so we can choose $\B(\xi)=Q(M)$.
	\end{proof}
	We can also use Lemma~2 to prove the main result of \cite{Decell}, which was originally used to prove \cite[Thm.~1]{R_pot}. We will aim to give a formula for the Moore--Penrose generalized inverse: for any matrix $P\in\R^{m\times N}$ there is a unique matrix $P^\dagger\in\R^{N\times m}$ such that:
	$$
	P^\dagger P P^\dagger = P^\dagger,\quad PP^\dagger P=P, \quad (PP^\dagger)^*=PP^\dagger,\quad (P^\dagger P)^*=P^\dagger P,
	$$
	or, equivalently, if
	$$
	PP^\dagger = \proj_{\rmim P},\quad P^\dagger P=\proj_{\rmim P^*}.
	$$
	For a proof of these facts and general properties of pseudoinverses, see \cite{Campbell}. We can now prove a computable formula for the generalized inverse:
	\begin{theorem}[Decell]\label{thm:decell}
		Let $P\in\R^{m\times N}$ and let $a_i$ be given by Lemma~\ref{lem:proj} for $M= PP^*$. Then
		$$
		P^\dagger=-\frac{1}{a_r}\sum_{i=1}^r a_{r-i}P^*(PP^*)^{i-1}.
		$$
	\end{theorem}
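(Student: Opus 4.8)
The plan is to express the Moore--Penrose inverse in terms of the orthogonal projection onto $\rmim P$, and then use Lemma~\ref{lem:proj} to get a polynomial formula for that projection. Recall that $P^\dagger$ satisfies $PP^\dagger = \proj_{\rmim P}$, and since $\rmim P = \rmim(PP^*)$ (because $\ker P^* = \ker(PP^*)$, a standard fact: $P^*x=0 \iff PP^*x=0$ as $\|P^*x\|^2 = \langle PP^*x,x\rangle$), we have $\proj_{\rmim P} = \proj_{\rmim(PP^*)} = \id - \proj_{\ker(PP^*)}$. Setting $M = PP^* \in \R^{m\times m}_{\sym}$, Lemma~\ref{lem:proj} gives $\proj_{\ker M} = \tfrac{1}{a_r}Q(M)$, so $\proj_{\rmim P} = \id - \tfrac{1}{a_r}Q(M)$.

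The key step is then to recognize $P^\dagger$ from the identity $PP^\dagger = \id - \tfrac{1}{a_r}Q(M)$. Expanding $Q(M) = \sum_{i=0}^r a_{r-i}M^i = a_r\id + \sum_{i=1}^r a_{r-i}M^i$ (using that the constant term is $a_r$ since $Q(0)=(-1)^r\lambda_1\cdots\lambda_r = a_r$), one finds
$$
\id - \frac{1}{a_r}Q(M) = -\frac{1}{a_r}\sum_{i=1}^r a_{r-i}M^i = -\frac{1}{a_r}\sum_{i=1}^r a_{r-i}(PP^*)^i.
$$
Factoring out a leading $P$ from each term, $(PP^*)^i = P\bigl(P^*(PP^*)^{i-1}\bigr)$ (equivalently $(PP^*)^i = (PP^*)^{i-1}PP^*$), yields
$$
PP^\dagger = P\left(-\frac{1}{a_r}\sum_{i=1}^r a_{r-i}P^*(PP^*)^{i-1}\right).
$$
This strongly suggests the claimed candidate $X \coloneqq -\tfrac{1}{a_r}\sum_{i=1}^r a_{r-i}P^*(PP^*)^{i-1}$ for $P^\dagger$, and indeed one has $PX = \proj_{\rmim P}$ by construction.

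To finish, I would verify that $X$ genuinely equals $P^\dagger$, either by checking the four Penrose axioms or, more economically, by checking the two projection characterizations $PX = \proj_{\rmim P}$ and $XP = \proj_{\rmim P^*}$. The first holds by the computation above. For the second, note $X = P^*R(M)$ where $R(M) = -\tfrac{1}{a_r}\sum_{i=1}^r a_{r-i}M^{i-1}$ is a polynomial in $M = PP^*$; then $XP = P^*R(PP^*)P$, and since $R(PP^*)$ is symmetric and commutes appropriately, one checks this is the symmetric idempotent projecting onto $\rmim P^* = \rmim X$. The main obstacle is this second verification: one must argue carefully that $P^*R(PP^*)P$ is exactly $\proj_{\rmim P^*}$ rather than merely something with the right image, which is cleanest to see by passing to the singular value decomposition of $P$ (equivalently, diagonalizing $M$ as in the proof of Lemma~\ref{lem:proj}) and checking the formula acts as the identity on $\rmim P^*$ and as zero on $\ker P = (\rmim P^*)^\perp$.
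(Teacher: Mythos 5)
Your proposal is correct in substance but takes a genuinely different route from the paper. The paper takes the existence and uniqueness of $P^\dagger$ for granted (citing Campbell--Meyer) and \emph{derives} the formula in a single chain: starting from the axiom $P^\dagger=P^\dagger PP^\dagger$, it substitutes $PP^\dagger=\proj_{\rmim P}=\proj_{\rmim M}$, inserts the polynomial expression $\proj_{\rmim M}=-\frac{1}{a_r}\sum_{i=1}^r a_{r-i}M^i$ from Lemma~\ref{lem:proj} exactly as you do, and closes with $P^\dagger PP^*=(P^\dagger P)^*P^*=(PP^\dagger P)^*=P^*$. You instead \emph{verify} that the explicit candidate $X\coloneqq-\frac{1}{a_r}\sum_{i=1}^ra_{r-i}P^*(PP^*)^{i-1}$ is the pseudoinverse. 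The shared core is identical --- Lemma~\ref{lem:proj} applied to $M=PP^*$ together with $\rmim M=\rmim P$ --- but the paper's finish is shorter, needing no second verification, while yours is more self-contained: it would in effect re-establish existence of $P^\dagger$ rather than presuppose it.

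One caveat about your ``more economical'' closing step: the pair $PX=\proj_{\rmim P}$, $XP=\proj_{\rmim P^*}$ does \emph{not} by itself characterize $P^\dagger$, despite the paper's ``equivalently.'' For $P=\left(\begin{smallmatrix}1&0\\0&0\end{smallmatrix}\right)$ and $X=\id$ both identities hold, yet $P^\dagger=P\neq \id$ (the axiom $XPX=X$ fails); Moore's genuine second condition is $XP=\proj_{\rmim X}$. Your argument survives because your $X$ has the extra structure $X=P^*R(M)$, so $\rmim X\subseteq\rmim P^*$, and this \emph{together with} $PX=\proj_{\rmim P}$ does force $X=P^\dagger$: for every $y$ one has $P(Xy-P^\dagger y)=0$ while $Xy-P^\dagger y\in\rmim P^*=(\ker P)^\perp$, whence $Xy=P^\dagger y$. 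This observation also makes your SVD verification of $XP=\proj_{\rmim P^*}$ unnecessary; if you want it anyway, no diagonalization is needed: $XP$ vanishes on $\ker P$, and for $v=P^*w\in\rmim P^*$ one computes $XPv=P^*R(M)Mw=P^*\proj_{\rmim M}w=P^*w=v$, using $R(M)M=-\frac{1}{a_r}\sum_{i=1}^ra_{r-i}M^i=\proj_{\rmim M}$ and $\ker M=\ker P^*$. So: flesh out the last paragraph along these lines (or adopt the paper's axiom-chain shortcut) and the proof is complete.
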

	This result, or rather the fact that under the constant rank condition, it implies that the multiplier
	$
	\xi\mapsto\B^\dagger(\xi)
	$
	is smooth away from zero, is still needed to infer that the partial differential equation $\A v=0$ can be solved for $v=\B u$, e.g., with periodic boundary conditions, cf. \cite[Lem.~2]{R_pot}. We will also use this fact in Proposition \ref{prop:hhhh} to deduce a variant of the standard Helmholtz decomposition that is often used in the study of these problems. 
	\begin{proof}[Proof of Theorem~\ref{thm:decell}]
		By Lemma~\ref{lem:proj} we have that 
		$$
		\proj_{\ker M}=\frac{Q(M)}{a_r}=\mathbf{I}_m+\frac{1}{a_r}\sum_{i=1}^ra_{r-i}M^i, 
		$$
		so
		$$
		\proj_{\rmim M}=-\frac{1}{a_r}\sum_{i=1}^ra_{r-i}M^i.
		$$
		Therefore, by using both definitions of the generalized inverse, we have
		\begin{align*}
			P^\dagger&=P^\dagger P P^\dagger =P^\dagger \proj_{\rmim M}=-\frac{1}{a_r}\sum_{i=1}^ra_{r-i}P^\dagger (PP^*)^i=-\frac{1}{a_r}\sum_{i=1}^ra_{r-i}P^\dagger PP^*(PP^*)^{i-1},
		\end{align*}
		which is enough to conclude.
	\end{proof}
	A homogeneous differential operator $\A$ is said to have \textbf{constant rank} if $\mathcal R_\A=\{0\}$. Here by homogeneous differential operator we mean that each row $(\A_{ij})_j$ is $h_i$-homogeneous for some natural number $h_i$. In this case, considering the differential system $\A v=0$ (compensating condition), we have that each equation is homogeneous, of various degrees.
	\begin{proposition}\label{prop:hhhh}
		Let $1<p<\infty$, $\A$ be homogeneous (in the rows) and of constant rank, and $\B$ be as in Theorem \ref{thm:main}; say that $\B$ has degree $k$. Then there exists a bounded linear map $v\in\lebe^p(\R^n,\R^N)\mapsto u\in{\dot\sobo}{^{k,p}}(\R^n,\R^N)$ such that
		$$
		\|D^k u\|_{\lebe^p}\leq c\|\B u\|_{\lebe^p}\leq c\|v\|_{\lebe^p}\quad\text{and}\quad\|v-\B u\|_{\lebe^p}\leq c \sum_{i=1}^m\|\A_iv\|_{{\dot\sobo}{^{-h_i,p}}},
		$$
		where $h_i$ is the homogeneity of the $i$th row of $\A$.
	\end{proposition}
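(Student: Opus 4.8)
The plan is to build $u$ from $v$ by a Fourier-multiplier construction and then verify the two chains of inequalities using Mikhlin--H\"ormander multiplier theory. First I would work on the Fourier side. By Theorem~\ref{thm:main} we have $\ker\A(\xi)=\rmim\B(\xi)$ for $\xi\neq 0$, and by the constant rank hypothesis this holds on all of $\R^n\setminus\{0\}$. The natural definition is to project $\hat v(\xi)$ onto $\rmim\B(\xi)=\ker\A(\xi)$ and then invert $\B(\xi)$ on its image via the Moore--Penrose pseudoinverse: set
$$
\hat u(\xi)\coloneqq \B^\dagger(\xi)\,\proj_{\rmim\B(\xi)}\hat v(\xi)=\B^\dagger(\xi)\hat v(\xi),
$$
using that $\B\B^\dagger=\proj_{\rmim\B}$. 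Since $\B$ is $k$-homogeneous and of constant rank on $\R^n\setminus\{0\}$, Theorem~\ref{thm:decell} expresses $\B^\dagger(\xi)$ as a rational function whose denominator $a_r(\xi)=(-1)^r\lambda_1(\xi)\cdots\lambda_r(\xi)$ is the product of the nonzero eigenvalues of $\B^*(\xi)\B(\xi)$; this is a homogeneous polynomial that is nonvanishing on $\R^n\setminus\{0\}$ by the constant rank assumption, so $\B^\dagger$ is smooth away from the origin. A homogeneity count shows $\B^\dagger$ is $(-k)$-homogeneous, hence $\xi\mapsto|\xi|^k\B^\dagger(\xi)$ is $0$-homogeneous and smooth on the sphere, so it is a Mikhlin--H\"ormander multiplier; this gives the $\dot\sobo^{k,p}$ bound $\|D^k u\|_{\lebe^p}\leq c\|\B u\|_{\lebe^p}$ once we note $\widehat{\B u}=\B\B^\dagger\hat v=\proj_{\rmim\B}\hat v$.

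Next I would establish the middle inequality $\|\B u\|_{\lebe^p}\leq c\|v\|_{\lebe^p}$. Since $\B u$ corresponds on the Fourier side to $\proj_{\rmim\B(\xi)}\hat v(\xi)$ and the projection $\proj_{\rmim\B(\xi)}=\B(\xi)\B^\dagger(\xi)$ is $0$-homogeneous and smooth on $\R^n\setminus\{0\}$, it too is a Mikhlin--H\"ormander multiplier, yielding boundedness on $\lebe^p$ for $1<p<\infty$. This simultaneously shows $v\mapsto u$ is bounded $\lebe^p\to\dot\sobo^{k,p}$ as claimed.

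For the final estimate, the key observation is that $v-\B u$ corresponds to $(\id-\proj_{\rmim\B(\xi)})\hat v(\xi)=\proj_{\ker\B^*(\xi)}\hat v(\xi)$, and since $\rmim\B(\xi)=\ker\A(\xi)$, the complementary projection maps onto $(\ker\A(\xi))^\perp=\rmim\A^*(\xi)$. I would factor this complementary projection through $\A$ row by row. Writing $\A_i$ for the $i$th row (a differential operator of order $h_i$), I would show that $\proj_{\rmim\A^*(\xi)}$ can be written as $\sum_i \A_i^*(\xi)\,m_i(\xi)\A_i(\xi)$ for suitable multipliers $m_i(\xi)$, or more directly exhibit a multiplier representation in which each term carries a factor $\A_i(\xi)$; homogeneity dictates that the companion factor must be $(-h_i)$-homogeneous to keep the whole expression $0$-homogeneous. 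Pairing each such $0$-homogeneous smooth multiplier with the $\dot\sobo^{-h_i,p}$ norm on $\A_i v$ (which is exactly the norm whose multiplier symbol is $|\xi|^{-h_i}$ acting after $\A_i$) produces the claimed sum. The main obstacle is precisely this last factorisation: one must verify that the complementary projection admits a decomposition whose $i$th block is divisible, as a matrix of rational functions smooth on the sphere, by $\A_i(\xi)$ in a way compatible with the negative-order Sobolev norms. This is where the constant rank condition is essential, since it guarantees that the relevant symbols remain bounded and smooth on $\R^n\setminus\{0\}$; the rest is a routine application of the Mikhlin--H\"ormander theorem term by term.
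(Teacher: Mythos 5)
Your construction of $u$ and your first two estimates are correct and essentially coincide with the paper's argument. The paper splits $v=v_1+v_2$ on the Fourier side via $\hat v_1(\xi)=\proj_{\ker\A^*(\xi)\A(\xi)}\hat v(\xi)$ and $\hat v_2(\xi)=\A^\dagger(\xi)\widehat{\A v}(\xi)$, and then solves $v_1=\B u$; since $\B(\xi)\B^\dagger(\xi)=\proj_{\rmim\B(\xi)}=\proj_{\ker\A(\xi)}$, your $\B u$ is exactly the paper's $v_1$, and your route to $\|D^k u\|_{\lebe^p}\leq c\|\B u\|_{\lebe^p}\leq c\|v\|_{\lebe^p}$ --- smoothness of $\B^\dagger$ away from the origin via Theorem~\ref{thm:decell} and constant rank, $(-k)$-homogeneity of $\B^\dagger$ (legitimate here because $\B(t\xi)=t^k\B(\xi)$ is a \emph{scalar} rescaling, so the pseudoinverse scales by $t^{-k}$), and the H\"ormander--Mikhlin theorem --- is the same mechanism the paper uses, there implemented through \cite{GR} and through the rows $R_j$ of $\B^\dagger$ in the sketch proof of \eqref{eq:B}.

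The genuine gap is exactly where you flag it: the third inequality. You correctly reduce it to showing that $\id-\proj_{\ker\A(\xi)}=\proj_{\rmim\A^*(\xi)}$ factors through the rows of $\A$ with $(-h_i)$-homogeneous companions, but you never produce the factorisation, calling it ``the main obstacle''; as written the estimate is therefore not established. The missing idea is short, and it is the second Moore--Penrose identity you already used for $\B$: $\proj_{\rmim\A^*(\xi)}=\A^\dagger(\xi)\A(\xi)$, whence $\hat v(\xi)-\widehat{\B u}(\xi)=\A^\dagger(\xi)\widehat{\A v}(\xi)=\sum_i C_i(\xi)\widehat{\A_i v}(\xi)$ with $C_i$ the $i$th \emph{column} of $\A^\dagger$; writing $C_i(\xi)\widehat{\A_i v}(\xi)=C_i(\xi/|\xi|)\,|\xi|^{-h_i}\widehat{\A_i v}(\xi)$, each term is a $0$-homogeneous smooth multiplier applied to the function whose $\lebe^p$ norm is $\|\A_i v\|_{{\dot\sobo}{^{-h_i,p}}}$, and Mikhlin applies termwise. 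Note also that your diagonal ansatz $\sum_i\A_i^*(\xi)m_i(\xi)\A_i(\xi)$ is not the right form: expanding $\A^\dagger=\A^*(\A\A^*)^\dagger$ produces cross terms $\A_i^*[(\A\A^*)^\dagger]_{ij}\A_j$, and only the weaker statement --- that the $i$th summand carries the factor $\A_i$ on the right --- is available and needed. One further caution for when you complete the step: if the $h_i$ are not all equal, then $\A(t\xi)=\mathrm{diag}(t^{h_1},\dots,t^{h_m})\A(\xi)$ is not a scalar rescaling and the pseudoinverse does not transform column by column, so the exact $(-h_i)$-homogeneity of $C_i$ is delicate (for the symbol $\A(\xi)=(\xi,\ \xi^2)^T$ in one variable, $C_1(\xi)=\xi/(\xi^2+\xi^4)$ is not $(-1)$-homogeneous); a robust fix is to first replace $\A$ by the row-homogenized operator $\tilde\A$ from the proof of Theorem~\ref{thm:main}, which satisfies $\ker\tilde\A(\xi)=\ker\A(\xi)$, has a genuinely $(-h_m)$-homogeneous pseudoinverse, and gives $\|\tilde\A_i v\|_{{\dot\sobo}{^{-h_m,p}}}\simeq\|\A_i v\|_{{\dot\sobo}{^{-h_i,p}}}$ by another application of Mikhlin.
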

	One can formulate a variant of the bound for $u$ also in the case in which $\B$ has homogeneous columns, say of degree $\tilde h_j$. In that case, one has the estimates
	\begin{align}\label{eq:B}
		\|D^{\tilde h_j}u_j\|_{\lebe^p}\leq c\|\B u\|_{\lebe^p}.
	\end{align}
	We leave this exercise for the reader.
	\begin{proof}[Proof of Proposition \ref{prop:hhhh}]
		We write $v=v_1+v_2$, where $v_1,\,v_2$ are defined by the multipliers
		$$
		\hat v_1(\xi)\coloneqq \proj_{\ker \A^*(\xi)\A(\xi)}\hat v(\xi)\quad\text{and}\quad \hat v_2(\xi)=\A^\dagger(\xi) \widehat{\A v}(\xi).
		$$
		By Lemma \ref{lem:proj} and the constant rank condition, we have that $v_1$ is defined by a convolution with a  0-homogeneous multiplier that is smooth in $\R^n\setminus\{0\}$. By the H\"ormander--Mikhlin Theorem, we have that
		$$
		\|v_1\|_{\lebe^p}\leq c\|v\|_{\lebe^p}.
		$$
		Moreover, $\A v_1=0$, so that we can write $v_1=\B u$ for $u\in\dot\sobo{^{k,p}}$ with the required bound by using the ideas in \cite[Sec. 3.3]{GR}. 
		
		Writing $C_i(\xi)$ for the  columns of $\A^\dagger(\xi)$, it is a simple exercise to check that $C_i$ is $(-h_i)$-homogeneous. We have that
		$$
		\hat v_2(\xi)=\sum_{i=1}^M C_i(\xi) \widehat{\A_i v}(\xi)=\sum_{i=1}^M C_i\left(\frac{\xi}{|\xi|}\right) \frac{\widehat{\A_i v}(\xi)}{|\xi|^{h_i}}.
		$$
		By Theorem \ref{thm:decell} and the constant rank condition we can infer the necessary smoothness to apply the H\"ormander--Mikhlin Theorem again to obtain
		$$
		\|v_2\|_{\lebe^p}\leq c \sum_{i=1}^M\|\A_iv\|_{{\dot\sobo}{^{-h_i,p}}}.
		$$
		The proof is complete.
	\end{proof}
	\begin{proof}[Sketch proof of \eqref{eq:B}]
		We write $R_j(\xi)$ for the rows of $\B^\dagger(\xi)$, which are $(-\tilde h_j)$-homogeneous. The calculations in the proof of Proposition~\ref{prop:hhhh} are replaced by
		$$
		\widehat{D^{\tilde h_j}u_j}(\xi)=R_j(\xi)\widehat{\B u}(\xi)\otimes\xi^{\otimes \tilde h_j}=R_j\left(\frac{\xi}{|\xi|}\right)\widehat{\B u}(\xi)\otimes\left(\frac{\xi}{|\xi|}\right)^{\otimes\tilde h_j},
		$$
		and the H\"ormander--Mikhlin Theorem applies here as well.
	\end{proof}
	

\begin{thebibliography}{99}
		\bibitem{AAR} Arroyo-Rabasa, A., 2021. Characterization of Generalized Young Measures Generated by ${\mathcal {A}}$-free Measures. Archive for Rational Mechanics and Analysis, 242(1), pp.235-325.
		
		\bibitem{ARS} Arroyo-Rabasa, A. and Simental, J., 2021. An elementary proof of the homological properties of constant-rank operators. arXiv preprint arXiv:2107.05098.
		
		\bibitem{BCO} Ball, J.M., Currie, J.C. and Olver, P.J., 1981. Null Lagrangians, weak continuity, and variational problems of arbitrary order. Journal of Functional Analysis, 41(2), pp.135-174.
		
		\bibitem{Campbell} Campbell, S.L. and Meyer, C.D., 2009. Generalized inverses of linear transformations. Society for industrial and applied Mathematics.
		
		\bibitem{CLMS} Coifman, R., Lions, P.L., Meyer, Y. and Semmes, S., 1993. Compensated compactness and Hardy spaces. Journal de mathématiques pures et appliquées, 72(3), pp.247-286.
		
		\bibitem{Dacorogna82} Dacorogna, B., 1982. Quasi-convexité et semi-continuité inférieure faible des fonctionnelles non linéaires. Annali della Scuola Normale Superiore di Pisa-Classe di Scienze, 9(4), pp.627-644.
		
		\bibitem{Decell} Decell, Jr, H.P., 1965. An application of the Cayley-Hamilton theorem to generalized matrix inversion. SIAM review, 7(4), pp.526-528.
		
		\bibitem{FM99} Fonseca, I. and Müller, S., 1999. $\mathcal A$-Quasiconvexity, Lower Semicontinuity, and Young Measures. SIAM journal on mathematical analysis, 30(6), pp.1355-1390.
		
		\bibitem{GKR} Guerra, A., Kristensen, J., and Rai\cb{t}\u a, B., 2020. Oscillation and concentration under constant rank constraints.  Technical Report OxPDE-20.18, University of Oxford.
		
		\bibitem{GR} Guerra, A. and Rai\cb{t}ă, B., 2019. Quasiconvexity, null Lagrangians, and Hardy space integrability under constant rank constraints. arXiv preprint arXiv:1909.03923.
		
		\bibitem{GRS} Guerra, A., Rai\cb{t}ă, B. and Schrecker, M.R., 2020. Compensated compactness: continuity in optimal weak topologies. arXiv preprint arXiv:2007.00564.
		
		\bibitem{KR} Kristensen, J. and Rai\cb{t}ă, B., 2019. Oscillation and concentration in sequences of PDE constrained measures. arXiv preprint arXiv:1912.09190.
		
		\bibitem{Murat1} Murat, F., 1978. Compacité par compensation. Annali della Scuola Normale Superiore di Pisa-Classe di Scienze, 5(3), pp.489-507.
		
		\bibitem{Murat2} Murat, F., 1981. Compacité par compensation: condition nécessaire et suffisante de continuité faible sous une hypothese de rang constant. Annali della Scuola Normale Superiore di Pisa-Classe di Scienze, 8(1), pp.69-102.
		
		\bibitem{R_pot} Rai\cb{t}ă, B., 2019. Potentials for $\mathcal {A}$-quasiconvexity. Calculus of Variations and Partial Differential Equations, 58(3), pp.1-16.
		
		\bibitem{SW} Schulenberger, J.R. and Wilcox, C.H., 1971. Coerciveness inequalities for nonelliptic systems of partial differential equations. Annali di Matematica Pura ed Applicata, 88(1), pp.229-305.
		
		\bibitem{Tartar1} Tartar, L., 1979. Compensated compactness and applications to partial differential equations. In Nonlinear analysis and mechanics: Heriot-Watt symposium (Vol. 4, pp. 136-212).
		
		\bibitem{Tartar2} Tartar, L., 1983. The compensated compactness method applied to systems of conservation laws. In Systems of nonlinear partial differential equations (pp. 263-285). Springer, Dordrecht.
		
		\bibitem{Tartar_survey} Tartar, L., 2005. Compensation effects in partial differential equations. Rend Accad Naz Sci XL Mem Mat Appl (5), 29(1), pp.395-453.
		
	\end{thebibliography}
\end{document}